\DeclareMathAlphabet{\mathpzc}{OT1}{pzc}{m}{it}
\newcommand{\innprd}[2]{\left( #1 , #2 \right)}
\newcommand{\op}[1]{{\mathcal #1}}
\newcommand{\R}{\mathbb{R}}
\newcommand{\embed}{\mathcal{E}^h}
\newtheorem{lemma}{Lemma}[section]
\newtheorem{conjecture}{Conjecture}[section]
\newtheorem{theorem}{Theorem}[section]
\begin{document}

\title[Multigrid for Fractional Optimization Problems]{A Note on Multigrid Preconditioning for Fractional PDE-Constrained Optimization Problems}

\author{Harbir Antil}
\address{H. Antil, Department of Mathematical Sciences and the Center for Mathematics and Artificial Intelligence (CMAI), George Mason University, Fairfax, VA 22030, USA.}
\email{hantil@gmu.edu}

\author{Andrei Dr{\u a}g{\u a}nescu}
\address{A. Dr{\u a}g{\u a}nescu, Department of Mathematics and Statistics, University of Maryland, Baltimore County, 
1000~Hilltop Circle, Baltimore, Maryland 21250, USA.}
\email{draga@umbc.edu}

\author{Kiefer Green}
\address{K. Green, Department of Mathematical Sciences and the Center for Mathematics and Artificial Intelligence (CMAI), George Mason University, Fairfax, VA 22030, USA.}
\email{kgreen32@gmu.edu}

\thanks{H. Antil and K. Green are  partially supported by NSF DMS-1913004 and DMS-1818772.  
A. Dr{\u a}g{\u a}nescu is partially supported by NSF DMS-1913201 and the U.S. Department of Energy
Office of Science, Office of Advanced  Scientific Computing Research under Award 
Number DE-SC0005455. }

\begin{abstract}
In this note  we present a multigrid preconditioning method for solving  quadratic 
optimization problems constrained by a fractional diffusion equation. Multigrid methods
within the all-at-once approach 
to solve the first order-order optimality Karush-Kuhn-Tucker (KKT) systems are widely popular, but
their development have relied on the underlying systems being sparse. On the other hand, for 
most discretizations, the matrix representation of fractional operators is expected 
to be dense. We develop a preconditioning strategy for our problem based on a reduced 
approach, namely we eliminate the state constraint 
using the control-to-state map. Our multigrid preconditioning approach shows a dramatic 
reduction in the number of CG iterations. We assess the quality of preconditioner in terms
of the spectral distance. Finally, we provide a partial theoretical analysis for this preconditioner, 
and we formulate a conjecture which is clearly supported by our numerical experiments.
\end{abstract}

\keywords{optimal control, fractional diffusion, multigrid, preconditioner}

\maketitle

\section{Introduction}

Let $\Omega \subset \mathbb{R}^N$ be an open bounded Lipschitz polygonal domain with boundary 
$\partial\Omega$. The goal of this paper is to develop an efficient multigrid based solver for the following 
optimal control problem: Given datum $u_d \in L^2(\Omega)$ and a regularization parameter $\beta >0$, 
{solve}
	\begin{subequations}\label{eq:ocp}
		\begin{equation}
			\min_{z \in L^2(\Omega)} \frac12 \|u-u_d\|^2_{L^2(\Omega)} + \frac{\beta}{2} \|z\|^2_{L^2(\Omega)} \, ,
		\end{equation}
		subject to the constraints posed by the fractional partial differential equation (PDE) 
		\begin{equation}\label{eq:state}
			\begin{cases}
				(-\Delta)^s u &= z \quad \mbox{in } \Omega \, , \\
						 u &= 0 \quad \mbox{on } \partial\Omega \, . 
			\end{cases}
		\end{equation}
	\end{subequations}
Here, $u$ and $z$ denote the state and control variables, respectively. Moreover, $(-\Delta)^s$, 
with $0 < s < 1$, denotes the $s$ powers of the $L^2(\Omega)$ realization of the Laplace's operator $-\Delta$, 
with the Dirichlet boundary condition $u = 0$ on $\partial\Omega$. This is the so-called spectral fractional
Laplacian. We refer to \cite{HAntil_JPfefferer_SRogovs_2018a} for the case of non-zero boundary conditions.
	
The rising interest of the community in fractional operators has been motivated by their ever-growing  applicability. 
In~\cite{CJWeiss_BGVBWaanders_HAntil_2020a} (see also \cite{HAntil_MDelia_CGlusa_CJWeiss_BGVBWaanders_2019a}
for an efficient solver), a fractional Helmholtz 
equation is derived using first principle arguments in-conjunction with a constitutive relationship. It also shows a direct 
qualitative match between numerical simulations and experimental data. In the classical setting, it is well-known that 
constrained optimization problems with the Helmholtz equation as constraint arise naturally in various applications. Examples
include  direct-field acoustic testing \cite{larkin2014developments} and remote sensing applications such as source inversion in
seismology \cite{auger2006real}. 
A natural first step to create efficient solvers for these optimization problems is to begin with optimization problems
constrained by Poisson type equations. 
Following this line of argument, we are hereby creating an efficient solver for \eqref{eq:ocp}.
Fractional operators have also received
a significant attention due to their applicability in imaging science~\cite{HAntil_SBartels_2017a,HAntil_ZDi_RKhatri_2020a}. 

Problem~\eqref{eq:ocp} was introduced in~\cite{HAntil_EOtarola_2014a}, and has attracted significant attention ever since.
While it is a natural extension of the standard elliptic control problem corresponding to the case $s=1$, it leads to a number
of challenging questions, beginning with the definition and the numerical representation of the fractional operator.
In~\cite{HAntil_EOtarola_2014a} problem~\eqref{eq:ocp} was formulated and analyzed using 
the extension approach~\cite{PRStinga_JLTorrea_2010a,LCaffarelli_LSilvestre_2007a}.
An alternative numerical analysis for~\eqref{eq:ocp} was provided in~\cite{SDohr_CKahle_SRogovs_PSwierczynski_2019a}. The latter 
used a numerical scheme to approximate~\eqref{eq:u}, based on Kato's formula~\cite{TKato_1960a}, orginally introduced 
in~\cite{ABonito_JEPasciak_2015a}. See also \cite{GHeidel_VKhoromskaia_BN_Khoromskij_VSchulz_2018a} for a tensor based method 
to solve \eqref{eq:ocp}. For completeness, we also refer to related optimal control problems corresponding to integral fractional Laplacian where the control is distributed \cite{HAntil_MWarma_2020a,MDElia_CGlusa_EOtarola_2019a}, or it is in the coefficient \cite{HAntil_MWarma_2019a,HAntil_MWarma_2018b}, or it is in the exterior \cite{HAntil_RKhatri_MWarma_2019a,HAntil_DVerma_MWarma_2020a}. We also refer to \cite{MAinsworth_CGlusa_2017a} for an efficient multigrid solver for fractional PDEs with integral fractional Laplacian.

The majority of efficient solution methods for solving PDE-constrained optimization problems focus on the first order optimality 
conditions, namely the Karush-Kuhn-Tucker (KKT) {system \cite{ABorzi_VSchultz_2012a}}. 
The KKT system couples the PDE~\eqref{eq:state} (the state equation) 
and the adjoint equation, the latter being a linear PDE with a similar character to the state equation. 
Hence, for the case of classical 
PDE constraints with finite element discretizations, 
the KKT system -- albeit indefinite -- will have have a sparse structure, and  solvers and preconditioners used for 
the state equation can play an important role for the KKT system as well. However, for most discretizations 
the matrix representation of discrete fractional operators is expected to be dense, therefore the all-at-once approach
of solving the KKT system loses its main {attractiveness}, namely sparsity.

In this work we use a reduced approach, namely we eliminate the state constraint from the optimization 
problem~\eqref{eq:ocp} using the control-to-state map.
Using the discretization from \cite{SDohr_CKahle_SRogovs_PSwierczynski_2019a}, we introduce 
a multigrid based preconditioner to solve \eqref{eq:ocp}. Multigrid methods, traditionally known as some of the most
efficient solvers of discretizations of PDEs, has been employed in recent times 
with great success in PDE-constrained optimization~\cite{ABorzi_VSchultz_2012a} as well.
Our approach is motivated by \cite{ADraganescu_TDupont_2008a}, and we 
develop a multigrid preconditioner for the reduced system of~\eqref{eq:ocp}. 

\section{The Fractional Operator and the Optimality Conditions}

\subsection{Continuous optimality conditions}
For $s \ge 0$, we define the fractional order Sobolev space
	\begin{equation}\label{eq:Hs}
		\mathbb{H}^s(\Omega) 
		:= \left\{ u = \sum_{k=1}^\infty u_k \varphi_k \in L^2(\Omega) \, : \, 
                \|u\|_{\mathbb{H}^s(\Omega)}^2 := \sum_{k=1}^\infty \lambda_k^s u_k^2 \ < \infty \right\} , 
	\end{equation}
where $\lambda_k$ are the eigenvalues of $-\Delta$ and $\varphi_k$ the corresponding eigenfunctions with zero Dirichlet boundary 
conditions and $\|\varphi_k\|_{L^2(\Omega)}=1$, and 
	\[
		u_k = (u,\varphi_k)_{L^2(\Omega)} = \int_\Omega u \varphi_k \, .
	\]	
By now, it is well-known that the definition of $\mathbb{H}^s(\Omega) $ in \eqref{eq:Hs} is equivalent to  
$H^s_0(\Omega)$ for $s > 1/2$, and 
$\mathbb{H}^s(\Omega) =H^s(\Omega) = H^s_0(\Omega)$ when $s < 1/2$, while $\mathbb{H}^{\frac12}(\Omega)
= H^{\frac12}_{00}(\Omega)$, i.e., the Lions-Magenes space \cite{LTartar_2007}. 
Recall that, {for $0 < s < 1$,}  $\mathbb{H}^s(\Omega)$ is the interpolation space between $L^2(\Omega)$ and $H^1_0(\Omega)$ \cite{LTartar_2007}, 
a fact that is relevant for the analysis {below}.
Let $\mathbb{H}^{-s}(\Omega)$ be the dual space of $\mathbb{H}^s(\Omega)$. 
	
For $s \ge 0$, the spectral fractional Laplacian is defined on the space $C_0^\infty(\Omega)$ by
	\[
		(-\Delta)^s u := \sum_{k=1}^\infty \lambda_k^s u_k \varphi_k  \quad \mbox{with} \quad u_k = \int_\Omega u\varphi_k \, .
	\]		
Notice that, for any $w = \sum_{k=1}^\infty w_k \varphi_k \in \mathbb{H}^s(\Omega)$, we have that 	
	\[
		\left|\int_\Omega (-\Delta)^s u w\right| = \left|\sum_{k=1}^\infty \lambda_k^s u_k w_k \right|
			= \left|\sum_{k=1}^\infty \lambda_k^{\frac{s}{2}} u_k \lambda_k^{\frac{s}{2}} w_k \right|
			\le \|u\|_{\mathbb{H}^s(\Omega)} \|w\|_{\mathbb{H}^s(\Omega)} ,
	\]
and thus $(-\Delta)^s$ extends as an operator mapping from $\mathbb{H}^s(\Omega)$ to $\mathbb{H}^{-s}(\Omega)$ due to 
density. In addition, we have that
	\[
		\|u\|_{\mathbb{H}^s(\Omega)} = \|(-\Delta)^\frac{s}{2} u\|_{L^2(\Omega)} \, .
	\]
Cf.~\cite{PRStinga_JLTorrea_2010a}, 
for every $z \in \mathbb{H}^{-s}(\Omega)$ there exists a unique $u \in \mathbb{H}^s(\Omega)$ that 
solves~\eqref{eq:state}. Using  Kato's formula (see \cite{ABonito_JEPasciak_2015a,HAntil_JPfefferer_2017a} for a derivation), the solution 
$u$ can be explicitly written as
	\begin{equation}
	\label{eq:u}
		u = (-\Delta)^{-s} z = \frac{\sin s\pi}{\pi} \int_{-\infty}^\infty e^{(1-s)y} (e^y-\Delta)^{-1} z \dif y \, .
	\end{equation}
Notice that $(-\Delta)^{-s} : \mathbb{H}^{-s}(\Omega) \rightarrow \mathbb{H}^{s}(\Omega)$ is bounded and linear. By
restricting $(-\Delta)^{-s}$ to $L^2(\Omega)$, and using the compact embedding $\mathbb{H}^s(\Omega)\hookrightarrow L^2(\Omega)$, 
we can treat the solution map $\mathcal{K}^s := (-\Delta)^{-s}$ as a bounded linear operator in $L^2(\Omega)$. 
Hence, the adjoint operator $(\mathcal{K}^s)^* : L^2(\Omega) \rightarrow L^2(\Omega)$ is well-defined, and is equal to $\mathcal{K}^s$. 
Using $\mathcal{K}^s$, the reduced form of problem \eqref{eq:ocp} is given by 
	\begin{equation}\label{eq:ocp_red}
		\min_{z \in L^2(\Omega)} \frac12 \|\mathcal{K}^sz-u_d\|^2_{L^2(\Omega)} + \frac{\beta}{2}\|z\|^2_{L^2(\Omega)} .
	\end{equation}
Problem \eqref{eq:ocp_red} has a unique solution that satisfies the following first-order necessary and sufficient 
optimality conditions
	\begin{equation}\label{eq:Ks}
		\mathcal{H}^s z \stackrel{\mathrm{def}}{=}\left((\mathcal{K}^s)^* \mathcal{K}^s+\beta I\right) z = (\mathcal{K}^s)^* u_d .
	\end{equation}
Notice that \eqref{eq:Ks} follows immediately after differentiating twice the functional in \eqref{eq:ocp_red}. 
The operator $\mathcal{H}^s$ in~\eqref{eq:Ks} is the continuous reduced Hessian operator. 	
Next we shall discretize~\eqref{eq:Ks}.

\subsection{Discrete optimality conditions}
%
We consider a quasi-uniform discretization $\op{T}_h$ of $\Omega$ and the spaces of 
continuous piecewise linear functions $\op{V}_h$ and 
$\op{V}^0_h=\op{V}_h\cap H_0^1(\Omega)$. The control $z$ is discretized using $\op{V}_h$,
while the state $u$ is discretized using $\op{V}^0_h$.
According to~\cite{ABonito_JEPasciak_2015a}, 
the discrete solution operator $\op{K}_h^s: \op{V}_h\to \op{V}^0_h$ is defined as
	\[
		\op{K}_h^s := \frac{\sin s\pi}{\pi} m \sum_{\ell = -N^{-}}^{N^+} e^{(1-s)y_\ell} 
		{(e^{y_\ell}-\Delta_h)^{-1}} \, ,
	\]
where the quadrature nodes are uniformly distributed as $y_\ell = m\ell$. This quadrature rule 
has been shown to be exponentially convergent (see \cite{ABonito_JEPasciak_2015a}) to the 
continuous integral in \eqref{eq:u}. The underlying constants $N^{-}$ and $N^{+}$ are chosen
to balance the quadrature error and spatial discretization error. In our case they are: 
{$m \sim (\ln \frac{1}{h})^{-1}$}, $N^+ = \lceil \frac{\pi^2}{4s m^2}\rceil$, and $N^- = \lceil \frac{\pi^2}{4(1-s) m^2}\rceil$. 
Finally, we shall denote by $\pi_h:L^2(\Omega)\to \op{V}_h$, the  $L^2$-orthogonal projection.

Using the above discretization, the discrete form of \eqref{eq:Ks} is given by 
\begin{eqnarray}
\label{equ:discretesys}
\op{H}_h^s z_h \stackrel{\mathrm{def}}{=}  ((\op{K}_h^s)^*\op{K}_h^s + \beta I)z_h= (\op{K}_h^s)^* u_{d,h} \, ,
\end{eqnarray}
where $u_{d,h}=\pi_h u_d$.
This work is concerned with a multigrid preconditioning approach to efficiently solve~\eqref{equ:discretesys}.

\section{Two-grid and multigrid preconditioner}
\label{sec:twogrid}

\subsection{Preconditioner description}
\label{ssec:twogriddescription}
Following~\cite{ADraganescu_TDupont_2008a}, assuming  $\op{T}_h$ is a refinement of $\op{T}_{2h}$, 
we define the {two-grid} preconditioner:
\begin{eqnarray}
\label{equ:twogridprec}
\op{G}_h^s = \beta (I-\embed\pi_{2h}) + \embed\op{H}_{2h}^s\pi_{2h} \, ,
\end{eqnarray}
where $\embed:\op{V}_{2h} \to \op{V}_h$ is the natural embedding operator. 

The extension of the preconditioners from two-grid to multigrid  is a streamlined process that 
is presented in full detail in~\cite{ADraganescu_TDupont_2008a, Barker_Draganescu_2020}. {It is sufficient} to say that the multigrid version has a 
W-cycle structure, and that the coarsest grid has to be sufficiently fine. Hence, it may be that the coarsest level used in the multigrid
version of $\op{G}_h^s$ is not the coarsest that is in principle available by the existing geometric framework. The number of levels that can
be used is problem dependent, and depends also of the quality of the two-grid preconditioner, as described below.

\subsection{Analysis and conjecture}
\label{ssec:twogridanalysis}
We assess the quality of the preconditioner $\op{G}_h^s$ by estimating the spectral distance (see~\cite{ADraganescu_TDupont_2008a})
$d(\op{H}_h^s,\op{G}_h^s)$, where for two symmetric positive definite operators $A,B\in \mathfrak{L}(L^2(\op{V}_h))$ 
\begin{eqnarray}
\label{eqn:spec_dist}
d(A,B)= 
\max_{u\in \op{V}_h}\left|\ln \innprd{A u}{u} - \ln \innprd{B u}{u} \right | =
\max \{|\ln \lambda|\ :\ \lambda\in \sigma(A,B)\}.
\end{eqnarray}
For the optimal control of elliptic PDEs (the case $s=1$), and under maximum regularity assumptions, it is known that
\begin{equation}
\label{eq:ellipticopt}
d(\op{H}_h^1,\op{G}_h^1) \le C \frac{h^2}{\beta}.
\end{equation}
Consequently, when solving~\eqref{equ:discretesys} using multigrid preconditioned conjugate gradient (CG), the number of iterations will decrease
with increasing resolution at the optimal rate. This is significant, since at higher resolutions the most expensive operation
is precisely the Hessian-vector multiplication. A decrease in the power of $h$ in~\eqref{eq:ellipticopt}, which can occur in a number of
instances (boundary control, loss of elliptic regularity), results in~\eqref{equ:twogridprec} becoming a less efficient preconditioner.

We conduct our analysis using Lemma 1 in~\cite{Barker_Draganescu_2020}, which requires estimating the operator $L^2(\Omega)$-norm 
\begin{equation}
\label{eq:l2opnorm}
\|\op{K}_h^s-\embed\op{K}_{2h}^s\pi_{2h}\| = \sup_{z\in \op{V}_h}\frac{ \|(\op{K}_h^s-\embed\op{K}_{2h}^s\pi_{2h})z\|}{\|z\|},
\end{equation}
where $\|\cdot\|$ on the right-hand-side denotes the norm in $L^2(\Omega)$, and $\embed$ also denotes the restriction of $\embed$ to $\op{V}^0_h$. 
From here on, $\|\cdot\|$ without subscripts represents either the vector or the operator $L^2$-norm, depending on the context. 
{Notice that} only the control-to-state solution operators play a role in~\eqref{eq:l2opnorm}.
The estimation process is based on the following apriori estimate in Corollary 2 
from~\cite{SDohr_CKahle_SRogovs_PSwierczynski_2019a}, which assumes $\Omega$ to be convex in $\R^2$ or $\R^3$: for any $s\in (0,1)$ and $\varepsilon'>0$, there exists $C=C(\varepsilon',s)$ 
so that
\begin{equation}
\label{eq:l2opnormdisc}
 \|\op{K}_h^s z -\op{K}^s z\| \le C h^{2 s-\varepsilon'} \|z\|.
\end{equation}
We also recall the following regularity estimate: for $s\in (0,1)$ there
exists $C$ (uniformly bounded in $s$) so that:
\begin{equation}
\label{eq:regularity}
 \|\op{K}^s z\|_{\mathbb{H}^{2s}(\Omega)} \le C \|z\|.
\end{equation}
This immediately follows: if $z = \sum_{k=1}^{\infty} z_k \varphi_k$ in $L^2(\Omega)$, and
    $u = \sum_{k=1}^\infty u_k \varphi_k = \mathcal{K}^s z$, solves the state equation \eqref{eq:state}, then
  \[
  u_k = \lambda_k^{-s} z_k .
  \]	
  Then from the definition of $\mathbb{H}^s$-norm in \eqref{eq:Hs}, we have that 
	\[
		\| u \|_{\mathbb{H}^{2s}(\Omega)}^2
		= \sum_{k=1}^{\infty} \lambda_k^{2s} u_k^2 \varphi_k 
		= \sum_{k=1}^{\infty} \lambda_k^{2s} \lambda_k^{-2s} z_k^2 \varphi_k 
		= \|z\|^2 .
	\]
As a consequence of convergence~\eqref{eq:l2opnormdisc} and regularity~\eqref{eq:regularity} we obtain the following uniform bound (with respect to $h$)
of the operator norm of $\op{K}_h^s$: there exists $L_s$ independent of $h$ so that 
\begin{equation}
\label{eq:unifbound}
\|\op{K}^s_h z\|\le L_s \|z\|,\ \ \forall z\in \op{V}_h.
\end{equation}
\begin{lemma}
\label{lemma:twolevelapprox}
Assume $\Omega\subset \R^N$ with $N=2, 3$ {be a convex polygonal} bounded domain. Then for any $\varepsilon'>0$ and $s\in (0,1)$ {there is a constant} $C_s>0$ so that
\begin{equation}
\label{eq:ahest}
\|\op{K}_h^s-\embed\op{K}_{2h}^s\pi_{2h}\| \le C_s h^{2 s-\varepsilon'}.
\end{equation}
\end{lemma}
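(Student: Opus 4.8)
The plan is to compare both discrete solution operators to the continuous operator $\op{K}^s$ and to exploit the smoothing of $\op{K}^s$ encoded in the regularity estimate \eqref{eq:regularity}. Since $\embed$ is an $L^2$-isometry onto its range, the norm in \eqref{eq:l2opnorm} is unchanged if we regard $\op{K}_{2h}^s\pi_{2h}z$ as an element of $L^2(\Omega)$; with this identification, I would write, for $z\in\op{V}_h$,
\begin{equation*}
\op{K}_h^s z - \embed\op{K}_{2h}^s\pi_{2h}z = \left(\op{K}_h^s z - \op{K}^s z\right) + \op{K}^s(z - \pi_{2h}z) + \left(\op{K}^s - \op{K}_{2h}^s\right)\pi_{2h}z ,
\end{equation*}
and bound the three terms separately, taking the supremum over $\|z\|=1$ at the end.

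The first and third terms follow directly from the convergence estimate \eqref{eq:l2opnormdisc}. For the first, \eqref{eq:l2opnormdisc} on the grid $\op{T}_h$ gives $\|\op{K}_h^s z - \op{K}^s z\|\le C h^{2s-\varepsilon'}\|z\|$. For the third, \eqref{eq:l2opnormdisc} on $\op{T}_{2h}$ combined with the $L^2$-stability of the orthogonal projection, $\|\pi_{2h}z\|\le\|z\|$, yields $\|(\op{K}^s-\op{K}_{2h}^s)\pi_{2h}z\|\le C(2h)^{2s-\varepsilon'}\|\pi_{2h}z\|\le C' h^{2s-\varepsilon'}\|z\|$.

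The middle term $\op{K}^s(I-\pi_{2h})z$ is the crux, and this is where \eqref{eq:regularity} is essential. I would estimate it by duality: for $w\in L^2(\Omega)$, using that $\pi_{2h}$ is a self-adjoint idempotent,
\begin{equation*}
\innprd{\op{K}^s(I-\pi_{2h})z}{w} = \innprd{(I-\pi_{2h})z}{\op{K}^s w} = \innprd{(I-\pi_{2h})z}{(I-\pi_{2h})\op{K}^s w} ,
\end{equation*}
so that $\|\op{K}^s(I-\pi_{2h})z\|\le\|(I-\pi_{2h})z\|\,\sup_{\|w\|=1}\|(I-\pi_{2h})\op{K}^s w\| \le \|z\|\,\sup_{\|w\|=1}\|(I-\pi_{2h})\op{K}^s w\|$. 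For the remaining factor I would invoke the approximation property of the $L^2$-projection in fractional Sobolev norms, $\|(I-\pi_{2h})v\|\le C h^{t}\|v\|_{\mathbb{H}^t(\Omega)}$ for $0\le t\le 2$ (classical for $t\in\{0,1,2\}$ on the convex polygonal domain, and for intermediate $t$ by interpolation, using that $\mathbb{H}^t(\Omega)$ is the corresponding interpolation space), applied with $t=2s\in(0,2)$, followed by \eqref{eq:regularity}: $\|(I-\pi_{2h})\op{K}^s w\|\le C h^{2s}\|\op{K}^s w\|_{\mathbb{H}^{2s}(\Omega)}\le C h^{2s}\|w\|$. Hence $\|\op{K}^s(I-\pi_{2h})z\|\le C h^{2s}\|z\|\le C h^{2s-\varepsilon'}\|z\|$ for $h$ bounded.

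Summing the three bounds and taking the supremum over $\|z\|=1$ gives \eqref{eq:ahest} with a constant $C_s$ depending on $s$ (through the constants in \eqref{eq:l2opnormdisc} and the $s$-dependence of the exponent $2s$). The main obstacle is precisely the middle term: one must resist bounding $\|(I-\pi_{2h})z\|$ by a power of $h$ — it is only $O(1)$ for general $z\in L^2(\Omega)$ — and instead transfer all the regularity onto $\op{K}^s w$ via the duality identity above, which is exactly the point at which the $\mathbb{H}^{2s}$-smoothing of $\op{K}^s$ is consumed. A secondary technical point is justifying the fractional-order approximation estimate for the $L^2$-projection on the convex polygonal domain (including the range $1<2s<2$, where the piecewise-linear space still delivers the required order), and carrying along the arbitrarily small loss $\varepsilon'$ inherited from \eqref{eq:l2opnormdisc}.
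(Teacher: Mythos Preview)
Your proposal is correct and follows essentially the same approach as the paper: the same three-term decomposition via the continuous operator $\op{K}^s$, the same use of \eqref{eq:l2opnormdisc} for the two outer terms, and the same duality argument for the middle term exploiting the idempotence of $I-\pi_{2h}$ together with the fractional $L^2$-projection estimate and \eqref{eq:regularity}. You are in fact slightly more careful than the paper in noting that the projection estimate must be interpolated over the full range $0\le t\le 2$ to cover the case $2s\in(1,2)$.
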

\begin{proof}
For $z\in \op{V}_h$ we have 
\begin{align}
\label{eq:ah_estimate}
\|(\op{K}_h^s-\embed\op{K}_{2h}^s\pi_{2h})z\|  &\le   
\|(\op{K}_h^s-\op{K}^s)z\|  \nonumber \\
&\quad+\|(\op{K}^s(I-\pi_{2h})z\| +
\|(\op{K}^s-\op{K}_{2h}^s)\pi_{2h})z\|,
\end{align}
where we omitted the embedding operators. Using~\eqref{eq:l2opnormdisc} we can bound the first and third terms on the right-hand side 
of~\eqref{eq:ah_estimate} by 
\begin{eqnarray}
\label{eq:firstlastterm}
\|(\op{K}_h^s-\op{K}^s)z\| \le C h^{2 s -\varepsilon'}\|z\|,\ \ \ \|(\op{K}^s-\op{K}_{2h}^s)\pi_{2h} z\| \le  C (2h)^{2 s -\varepsilon'}\|z\| ,
\end{eqnarray}
where we have also used $\|\pi_{2h}z\|\le \|z\|$. 
For the middle term {in \eqref{eq:ah_estimate}} we interpolate between the inequalities (see~\cite{SCBrenner_RLScott_2008a})
\begin{equation}
\label{eq:ineqprojh_l2h1}
\|u-\pi_{h}u\|\le C h \|u\|_{H^1_0(\Omega)},\ \ \|u-\pi_{h}u\|\le \|u\|_{L^2(\Omega)},
\end{equation}
that hold for all $u\in H^1_0(\Omega)$, respectively $u\in L^2(\Omega)$. It follows that
\begin{equation}
\label{eq:ineqprojhs}
\|u-\pi_{h}u\|\le C h^s \|u\|_{\mathbb{H}^s(\Omega)},\ \ \forall u\in \mathbb{H}^s(\Omega).
\end{equation}
Hence,
\begin{eqnarray}
\nonumber
\|\op{K}^s(I-\pi_{2h})z\| &= & \sup_{v\in L^2(\Omega)} \frac{|\innprd{\op{K}^s(I-\pi_{2h})z}{v}|}{\|v\|} = 
\sup_{v\in L^2(\Omega)} \frac{|\innprd{(I-\pi_{2h})z}{\op{K}^s v}|}{\|v\|} \\
\nonumber
& = & \sup_{v\in L^2(\Omega)} \frac{|\innprd{(I-\pi_{2h})z}{\op{K}^s v - \pi_{2h}\op{K}^sv}|}{\|v\|}\\
\nonumber
&\stackrel{\eqref{eq:ineqprojhs}}{\le}&
 (2h)^{2s}\sup_{v\in L^2(\Omega)} \frac{\|(I-\pi_{2h})z\| \|\op{K}^s v\|_{\mathbb{H}^{2s}(\Omega)}}{\|v\|} \\
\label{eq:middleterm}
& \stackrel{\eqref{eq:regularity}}{\le}&  C (2h)^{2s} \|(I-\pi_{2h})z\| \le C' h^{2s} \|z\|.
\end{eqnarray}
The conclusion follows from~\eqref{eq:firstlastterm} and~\eqref{eq:middleterm}. 
\end{proof}

The next theorem follows from Lemma~\ref{lemma:twolevelapprox} and Lemma 1 in~\cite{Barker_Draganescu_2020}.
\begin{theorem}
\label{thm:twolevelapprox}
If $C_s h^{2 s-\varepsilon'} \le \beta/ (4 L_s)$, then
\begin{equation}
\label{eq:twolevelprecest}
d(\op{H}_h^s,\op{G}_h^s) \le 4 L_s\beta^{-1} h^{2 s-\varepsilon'}.
\end{equation}
\end{theorem}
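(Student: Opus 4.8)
The plan is to recognize Theorem~\ref{thm:twolevelapprox} as a direct application of Lemma~1 in~\cite{Barker_Draganescu_2020}, whose hypotheses are exactly the estimates already assembled in this section. That lemma is an abstract statement: given a ``reduced Hessian'' of the form $\op{A}_h^*\op{A}_h+\beta I$ on $\op{V}_h$ together with the associated two-grid operator built from $\op{A}_{2h}$, it bounds the spectral distance between the two in terms of the operator-norm discrepancy $\|\op{A}_h-\embed\op{A}_{2h}\pi_{2h}\|$, a uniform bound on $\|\op{A}_h\|$ and $\|\op{A}_{2h}\|$, and a smallness assumption linking these to $\beta$. The first step is therefore to identify $\op{H}_h^s$ and $\op{G}_h^s$ with this template, taking $\op{A}_h=\op{K}_h^s$. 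For $\op{H}_h^s$ this is its definition~\eqref{equ:discretesys} together with the $L^2$-self-adjointness of $\op{K}_h^s$. For $\op{G}_h^s$ one expands~\eqref{equ:twogridprec}: since $\embed\op{H}_{2h}^s\pi_{2h}=\embed(\op{K}_{2h}^s)^*\op{K}_{2h}^s\pi_{2h}+\beta\,\embed\pi_{2h}$, the term $\beta\,\embed\pi_{2h}$ cancels against $-\beta\,\embed\pi_{2h}$ in $\beta(I-\embed\pi_{2h})$, leaving $\op{G}_h^s=\beta I+\embed(\op{K}_{2h}^s)^*\op{K}_{2h}^s\pi_{2h}$; and because $\embed$ is an $L^2$-isometric inclusion with $\pi_{2h}$ as a left inverse, so that $\embed^*\embed=I$ and $\pi_{2h}\embed=I$ on $\op{V}_{2h}$ while $\pi_{2h}^*=\embed$, this last operator equals $(\embed\op{K}_{2h}^s\pi_{2h})^*(\embed\op{K}_{2h}^s\pi_{2h})$. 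Thus $\op{G}_h^s$ is again of the form $\op{A}^*\op{A}+\beta I$, now with $\op{A}=\embed\op{K}_{2h}^s\pi_{2h}$, the two-grid surrogate of $\op{K}_h^s$.

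Once this identification is made, the three hypotheses of Lemma~1 in~\cite{Barker_Draganescu_2020} are precisely what has been established here: the discrepancy bound is Lemma~\ref{lemma:twolevelapprox}, namely $\|\op{K}_h^s-\embed\op{K}_{2h}^s\pi_{2h}\|\le C_s h^{2s-\varepsilon'}$; the uniform bound is~\eqref{eq:unifbound}, $\|\op{K}_h^s\|\le L_s$, which also gives $\|\embed\op{K}_{2h}^s\pi_{2h}\|\le L_s$ since $\embed$ and $\pi_{2h}$ are $L^2$-contractions; and the smallness requirement is exactly the assumed $C_s h^{2s-\varepsilon'}\le\beta/(4L_s)$. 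Invoking the lemma then yields~\eqref{eq:twolevelprecest}.

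For completeness I would also recall, in a couple of lines, the mechanism underlying that lemma, since all the ingredients are at hand. Because $\op{G}_h^s\ge\beta I$ one has $\|(\op{G}_h^s)^{-1}\|\le\beta^{-1}$, and the identity $\op{A}^*\op{A}-\op{B}^*\op{B}=\op{A}^*(\op{A}-\op{B})+(\op{A}-\op{B})^*\op{B}$ gives $\|\op{H}_h^s-\op{G}_h^s\|\le 2L_s\,\|\op{K}_h^s-\embed\op{K}_{2h}^s\pi_{2h}\|$; hence every $\lambda\in\sigma(\op{H}_h^s,\op{G}_h^s)$ satisfies $|\lambda-1|\le\beta^{-1}\|\op{H}_h^s-\op{G}_h^s\|\le 2L_s C_s\beta^{-1}h^{2s-\varepsilon'}$, which the smallness hypothesis forces to be at most $1/2$, and on $[1/2,3/2]$ the elementary bound $|\ln\lambda|\le 2|\lambda-1|$ closes the estimate in the form~\eqref{eq:twolevelprecest}. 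I expect the only step needing genuine care to be the algebraic recasting of $\op{G}_h^s$ as $\op{A}^*\op{A}+\beta I$ -- i.e., correctly matching our preconditioner to the abstract framework of~\cite{Barker_Draganescu_2020} -- after which everything reduces to tracking constants.
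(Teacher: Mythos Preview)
Your proposal is correct and follows the same route as the paper, which simply states that the theorem ``follows from Lemma~\ref{lemma:twolevelapprox} and Lemma~1 in~\cite{Barker_Draganescu_2020}.'' Your write-up adds the algebraic verification that $\op{G}_h^s=\beta I+(\embed\op{K}_{2h}^s\pi_{2h})^*(\embed\op{K}_{2h}^s\pi_{2h})$ and a sketch of the mechanism behind the cited lemma, both of which are accurate and useful expansions of what the paper leaves implicit.
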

This result certifies that the quality of the two-grid (and hence multigrid) preconditioner is improving with
increasing resolution, as in the elliptic case, but at a rate that is degrading as $s$ decreases to $0$. Consequently, the preconditioner
is expected to be less efficient as $s$ decreases. 
At the same time, the coarsest mesh that can be used may also need to be finer and finer as $s$ decreases due to the hypothesis in
Theorem~\ref{thm:twolevelapprox}; hence, the number of levels that can be used {at some point will necessarily} be smaller.
Remarkably, the numerical results in Section~\ref{sec:numerics} show an improved picture: they suggest that in fact a 
significantly stronger estimate holds. Hence, we formulate the following conjecture.

\begin{conjecture}
\label{conj:distest}
Assuming the {domain} is convex, there is a constant $\tilde{C}_s$ independent of $h$, so that, if $h$ is sufficiently small,
\begin{equation}
\label{eq:distest}
d(\op{H}_h^s,\op{G}_h^s) \le 
	\left\{ 
	\begin{array}{ll}
		\tilde{C}_s \beta^{-1}h^{4s} \, , 	& \mbox{if } 0<s < 1/2	\\
		\tilde{C}_s \beta^{-1}h^{2} \, , & \mbox{if } 1/2 \le s < 1 \, .
	\end{array}	
	\right.	
\end{equation}
\end{conjecture}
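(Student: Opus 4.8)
\medskip
\noindent\textbf{Proof proposal.}
The plan is to refine the route to Theorem~\ref{thm:twolevelapprox}: instead of feeding $E_h:=\op{K}_h^s-\embed\op{K}_{2h}^s\pi_{2h}$ into Lemma~1 of~\cite{Barker_Draganescu_2020} merely through its operator norm, I would exploit the product structure of the reduced Hessian together with a \emph{negative-norm} bound for $E_h$. Note first that $\op{K}_h^s$ is self-adjoint on $L^2(\Omega)$ (as the Galerkin discretization of the self-adjoint $\op{K}^s$, with the implicit $L^2$-projection onto $\op{V}_h^0$), that $\embed\op{K}_{2h}^s\pi_{2h}$ is self-adjoint because $\embed$ and $\pi_{2h}$ are mutually $L^2$-adjoint, and that $\pi_{2h}\embed$ restricts to the identity on $\op{V}_{2h}$. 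Hence $\op{G}_h^s=\beta I+(\embed\op{K}_{2h}^s\pi_{2h})^2$ and
\[
\op{H}_h^s-\op{G}_h^s=(\op{K}_h^s)^2-(\embed\op{K}_{2h}^s\pi_{2h})^2=\op{K}_h^sE_h+E_h\,\embed\op{K}_{2h}^s\pi_{2h}.
\]
Since $\op{G}_h^s\succeq\beta I$, Lemma~1 of~\cite{Barker_Draganescu_2020} --- equivalently, $\sigma(\op{H}_h^s,\op{G}_h^s)\subset[1-\beta^{-1}\|\op{H}_h^s-\op{G}_h^s\|,\,1+\beta^{-1}\|\op{H}_h^s-\op{G}_h^s\|]$ --- reduces~\eqref{eq:distest} to proving, for $h$ small,
\[
\bigl|\innprd{(\op{H}_h^s-\op{G}_h^s)z}{z}\bigr|=\bigl|\innprd{E_hz}{(\op{K}_h^s+\embed\op{K}_{2h}^s\pi_{2h})z}\bigr|\le \tilde C_s\,h^{\min(4s,2)-\varepsilon'}\|z\|^2,\qquad z\in\op{V}_h.
\]

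I would then decompose the second factor as $(\op{K}_h^s+\embed\op{K}_{2h}^s\pi_{2h})z=2\op{K}^sz+r_h$ with $\|r_h\|\le Ch^{2s-\varepsilon'}\|z\|$ (by~\eqref{eq:l2opnormdisc} and the argument behind~\eqref{eq:middleterm}). The $r_h$-part is handled by a pure \emph{squaring} estimate, $|\innprd{E_hz}{r_h}|\le\|E_h\|\,\|r_h\|\,\|z\|\le C_sh^{2s-\varepsilon'}\cdot Ch^{2s-\varepsilon'}\|z\|^2$, using Lemma~\ref{lemma:twolevelapprox}; this is already of the claimed order (for $s>1/2$ one picks $\varepsilon'<2s-1$ so that $h^{4s-2\varepsilon'}\le h^2$). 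For the principal term $\innprd{E_hz}{2\op{K}^sz}$, the regularity estimate~\eqref{eq:regularity} gives $\|\op{K}^sz\|_{\mathbb{H}^{2s}(\Omega)}\le C\|z\|$, so it remains to establish
\[
\|E_hz\|_{\mathbb{H}^{-2s}(\Omega)}\le C_s\,h^{\min(4s,2)-\varepsilon'}\|z\|.
\]
Writing $E_hz=(\op{K}_h^s-\op{K}^s)z+\op{K}^s(I-\pi_{2h})z+(\op{K}^s-\op{K}_{2h}^s)\pi_{2h}z$, the middle term is bounded cleanly --- essentially the computation~\eqref{eq:middleterm} carried out in the $\mathbb{H}^{-2s}$-norm: by duality, self-adjointness of $\op{K}^s$ and of $I-\pi_{2h}$, the bound $\|\op{K}^sv\|_{\mathbb{H}^{\min(4s,2)}(\Omega)}\le C\|v\|_{\mathbb{H}^{2s}(\Omega)}$ from~\eqref{eq:regularity}, and the $h^\sigma$-approximation property of $\pi_{2h}$ for $0\le\sigma\le2$ (extending~\eqref{eq:ineqprojhs}; see~\cite{SCBrenner_RLScott_2008a}), one obtains $\|\op{K}^s(I-\pi_{2h})z\|_{\mathbb{H}^{-2s}}\le Ch^{\min(4s,2)}\|z\|$. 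For the two outer terms, duality and the self-adjointness of $\op{K}_h^s$, $\op{K}_{2h}^s$ and $\op{K}^s$ give
\[
\|(\op{K}_h^s-\op{K}^s)z\|_{\mathbb{H}^{-2s}(\Omega)}\le\|z\|\sup_{0\ne v\in\mathbb{H}^{2s}(\Omega)}\frac{\|(\op{K}_h^s-\op{K}^s)v\|}{\|v\|_{\mathbb{H}^{2s}(\Omega)}},
\]
and similarly with $\op{K}_{2h}^s$ in place of $\op{K}_h^s$.

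Thus the conjecture reduces to a single new analytic ingredient: a \emph{regularity-enhanced consistency estimate} for the Bonito--Pasciak operator, namely $\|(\op{K}_h^s-\op{K}^s)v\|\le C_sh^{\min(4s,2)-\varepsilon'}\|v\|_{\mathbb{H}^{2s}(\Omega)}$ for $v\in\mathbb{H}^{2s}(\Omega)$ --- a gain of $h^{2s}$ over~\eqref{eq:l2opnormdisc} when the datum is one Sobolev order smoother, saturating at the piecewise-linear approximation order $h^2$. This is where I expect the main obstacle to lie: proving it requires re-entering the quadrature-plus-finite-element error analysis of~\cite{ABonito_JEPasciak_2015a,SDohr_CKahle_SRogovs_PSwierczynski_2019a} and tracking how the extra $\mathbb{H}^{2s}$-regularity of $v$ sharpens the finite-element error of each shifted resolvent solve $(e^{y_\ell}-\Delta_h)^{-1}$, how this plays against the exponentially weighted quadrature sum over $\ell$ (whose weights grow polynomially in $h^{-1}$), and how the $H^2$-limited elliptic regularity on a convex polygon caps the gain --- which is precisely the dichotomy $4s<2$ versus $4s\ge2$ appearing in~\eqref{eq:distest}. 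Everything else (the algebra, the $L^2$-projection bounds, the duality steps) is routine and mirrors the proof of Lemma~\ref{lemma:twolevelapprox}; the literally clean exponents in~\eqref{eq:distest} presuppose, in addition, that the $\varepsilon'$ losses above can be replaced by logarithmic factors or removed away from $s=1/2$, which is a secondary technical point.
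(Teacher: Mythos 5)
First, a point of calibration: the statement you were asked to prove is stated in the paper as a \emph{conjecture} --- the authors give no proof, only numerical evidence (Table~\ref{t:rate}) and a remark that a proof would require ``higher order estimates in weaker norms for the control-to-state map, in addition to more refined regularity results.'' Your proposal is, in spirit, exactly the route the authors anticipate: the factorization $\op{H}_h^s-\op{G}_h^s=\op{K}_h^sE_h+E_h\,\embed\op{K}_{2h}^s\pi_{2h}$ (using self-adjointness of $\op{K}_h^s$ and the identity $\pi_{2h}\embed=I$ on $\op{V}_{2h}$, which is correct), the splitting into a ``squared'' cross term of order $\|E_h\|\cdot h^{2s-\varepsilon'}$ and a principal term handled by pairing $E_h z$ in $\mathbb{H}^{-2s}(\Omega)$ against $\op{K}^s z\in\mathbb{H}^{2s}(\Omega)$ via~\eqref{eq:regularity}, and the duality treatment of the three pieces of $E_h$ are all sensible and mirror the structure of Lemma~\ref{lemma:twolevelapprox}. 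So as a research plan it is well aligned with the paper.

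However, as a proof it has a genuine gap, and it is the load-bearing one: the ``regularity-enhanced consistency estimate'' $\|(\op{K}_h^s-\op{K}^s)v\|\le C_s h^{\min(4s,2)-\varepsilon'}\|v\|_{\mathbb{H}^{2s}(\Omega)}$ is not proved, only identified as the missing ingredient. Everything that distinguishes the conjectured rates $h^{4s}$ (resp. $h^2$) from the proven rate $h^{2s-\varepsilon'}$ of Theorem~\ref{thm:twolevelapprox} is concentrated in that one estimate; establishing it requires reopening the sinc-quadrature-plus-FEM error analysis of \cite{ABonito_JEPasciak_2015a,SDohr_CKahle_SRogovs_PSwierczynski_2019a} with smoother data and verifying that the gain survives the exponentially weighted sum of shifted resolvent errors and the $H^2$-cap on a convex polygon --- none of which is carried out, and it is not obvious it goes through uniformly near $s=1/2$. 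In addition, even granting that estimate, your argument does not deliver the clean exponents of~\eqref{eq:distest}: the cross term yields only $h^{4s-2\varepsilon'}$, which at $s=1/2$ falls short of the conjectured $h^2$, and for $0<s<1/2$ you get $h^{4s-\varepsilon}$ rather than $h^{4s}$; removing these losses is not merely cosmetic, since \eqref{eq:l2opnormdisc} itself carries an intrinsic $\varepsilon'$. So the verdict is: a plausible and well-structured reduction, consistent with the authors' own outlook, but not a proof --- and since the paper proves nothing here either, the conjecture remains open on both sides.
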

It is notable that~\eqref{eq:distest} is consistent with the classical result~\eqref{eq:ellipticopt} for $s=1$, and also with 
Theorem~\ref{thm:twolevelapprox} as $s\approx 1$. However, it shows that the preconditioner is uniformly very good when $1/2\le s < 1$ and 
even with the classical case $s=1$, and is twice as efficient compared to what the analysis predicts for $0<s<1/2$.
{Proving} Conjecture~\ref{conj:distest} requires a different approach from proving Theorem~\ref{thm:twolevelapprox}, since we do not 
expect any superconvergence to hold in~\eqref{eq:ahest}. Instead, we expect the proof of the conjecture to involve higher order estimates 
in weaker norms for the control-to-state map, in addition to more refined regularity results.

\section{Numerical experiments}
\label{sec:numerics}
We {have} performed two kinds of numerical experiments. First we aim to verify~\eqref{eq:distest} directly by building matrices corresponding to 
the Hessian and the two-grid preconditioner for a set of grids with $h_j=2^{-j}$, $j=j_{\min}, \dots, j_{\max}$, followed by a direct computation of
$d^s_{h_j} := d(\op{H}_{h_j}^s,\op{G}_{h_j}^s)$ using generalized eigenvalues ($\op{H}_{h_j}^s u=\lambda  \op{G}_{h_j}^s u$).
Then we form the ratios $d^s_{h_{j-1}}/d^s_{h_j}$ to confirm the formula~\eqref{eq:distest}. 
We show results for $\beta=1$ and $\beta=0.1$ for this purpose. However, due to the sizes of the matrices involved, these computations
are limited to the one-dimensional case $\Omega=(0,1)$.
The results for $s=0.25, 0.3, 0.4, 0.5, 0.6, 0.7$ are shown in Table~\ref{t:rate}, and they strongly support Conjecture~\ref{conj:distest}.
The precise values of $j_{\min}, j_{\max}$ vary with~$s$, due primarily to memory limitation (smaller $s$ requires more memory).
It is notable that the spectral distances in the lower part of the table are approximately ten times larger than 
{their} counterparts in the upper half (for 
a value of $\beta$ that is ten times smaller), thus also supporting the dependence on $\beta$ in~\eqref{eq:distest}.

\begin{table}[tbhp]
  {\footnotesize
    \caption{Direct measurements of spectral distances in one spatial dimension for two different regularization parameters $\beta$
    		and different mesh sizes. The dependence on $\beta$ is according to the predicted theory in Theorem~\ref{thm:twolevelapprox}
		and the dependence on meshsize $h$ is according to the Conjecture~\ref{conj:distest}.}
    \label{t:rate}
    \begin{center}
      \begin{tabular}{|l|l|l|l|l|l|l|l|} \hline
        \multicolumn{8}{|c|}{$\boldsymbol\beta=1$}\\\hline
        $N$   & 16 & 32 & 64 & 128 & 256 & 512 & 1024\\\hline
        $s=0.25$ & 3.51e-2 &  1.78e-2 &  8.97e-03 &  4.50e-3 &  2.25e-3  & 1.13e-3 &  5.64e-4\\
        $\log_2(d_i/d_{i+1})$ & 0.9771 &   0.9910  &  0.9961  &  0.9982  &  0.9991  &  0.9996 & \\\hline
        $s=0.3$ & 1.82e-2 &  8.02e-3 &  3.51e-3 &  1.53e-3 &  6.66e-4 &  2.90e-4 &  1.26e-4\\
        $\log_2(d_i/d_{i+1})$ & 1.1807  &  1.1931  &  1.1976  &  1.1991  &  1.1997  &  1.1999 & \\\hline
        $s=0.4$ & 4.81e-3 &  1.61e-3 &  5.34e-4 &  1.76e-4 &  5.82e-5 &  1.92e-5 &  6.34e-6\\
        $\log_2(d_i/d_{i+1})$ & 1.5780 &   1.5934  &  1.5976 &   1.5993 &   1.5998 &   1.5999& \\\hline\hline
        $N$   & 64 & 128 & 256 & 512 & 1024 & 2048 & 4096\\\hline
        $s=0.5$ & 1.20e-4  & 2.71e-5 &  6.16e-6 &  1.46e-6  & 3.43e-7  & 8.24e-8 &  2.03e-08\\
        $\log_2(d_i/d_{i+1})$ & 2.1432 &   2.1386 &   2.0742 &   2.0949 &   2.0566 &   2.0194 & \\\hline
        $s=0.6$ &  8.53e-5 &  1.89e-5 &  4.45e-6 &  1.02e-6 &  2.40e-7 &  5.81e-8 &  1.40e-8\\
        $\log_2(d_i/d_{i+1})$ & 2.1730 &   2.0865 &   2.1167 &   2.0946 &   2.0486  &  2.0536 & \\\hline
        $s=0.7$ & 5.83e-5 &  1.37e-5 &  3.15e-6 &  7.25e-7 &  1.71e-7 &  4.13e-8 &  1.01e-8\\
        $\log_2(d_i/d_{i+1})$ & 2.0930 &   2.1184 &   2.1195 &   2.0805 &   2.0524 &   2.0350 &\\\hline\hline
        \multicolumn{8}{|c|}{$\boldsymbol\beta=0.1$}\\\hline
        $N$   & 16 & 32 & 64 & 128 & 256 & 512 & 1024\\\hline
        $s=0.25$ & 3.06e-1 &  1.66e-1 & 8.63e-2 & 4.41e-2 & 2.23e-2 &  1.12e-2 &  5.62e-3 \\
        $\log_2(d_i/d_{i+1})$   & 0.8846 & 0.9391 & 0.9685 & 0.9840 &  0.9919 &  0.9959 &  \\\hline
        $s=0.3$  & 1.68e-1 & 7.75-2 &  3.45e-2 & 1.52e-2 &  6.64e-3 & 2.90e-3 &  1.26.e-3 \\
        $\log_2(d_i/d_{i+1})$   & 1.1210 & 1.1651 & 1.1850 & 1.1935 &  1.1972  &  1.1988 & \\\hline
        $s=0.4$ &  4.71e-2  & 1.60e-2 & 5.33e-3 &  1.76e-3 &  5.82-4 &  1.92e-4 & 6.34e-5 \\
        $\log_2(d_i/d_{i+1})$ & 1.5578 &  1.5864 &  1.5953 & 1.5985 &  1.5995 &  1.5998 & \\\hline\hline
        $N$   & 64 & 128 & 256 & 512 & 1024 & 2048 & 4096\\\hline
        $s=0.5$ &  8.74e-4 &  2.12e-4 &  5.20e-5 &  1.29e-5 &  3.20e-6 & 7.98e-7 & 1.99e-8 \\
        $\log_2(d_i/d_{i+1})$   & 2.0416 & 2.0299 & 2.0118 & 2.0111 & 2.0045 & 2.0019 & \\\hline
        $s=0.6$ &  5.72e-4 &  1.28e-4 & 3.02e-5 & 7.01e-6 & 1.65e-6 & 4.00e-7 & 9.73e-8 \\
        $\log_2(d_i/d_{i+1})$  & 2.1625 &  2.0807 & 2.1077 & 2.0866 &  2.0445 &  2.0400 & \\\hline
        $s=0.7$  &  4.40e-4  & 1.03e-4 &  2.39e-5 & 5.51e-6  & 1.31e-6 &  3.15e-7 & 7.72e-8 \\
        $\log_2(d_i/d_{i+1})$  & 2.0900 &  2.1138 &    2.1141 &  2.0764 &  2.0496 &  2.0330 & \\\hline
        \end{tabular}
    \end{center}
  }
\end{table}

The second kind of numerical results are actual two-dimensional solves in $\Omega=(0,1)^2$ of~\eqref{equ:discretesys}, i.e.,
our optimal control problem with a multigrid version of the preconditioner. The data is 
$u_d(x,y)=\sin(4\pi x) \sin(3 \pi y)$.
For each case considered, we compare the
number of unpreconditioned CG iterations to the number of multigrid preconditioned CG (MGCG) iterations, and we report the
wall-clock times. The results are reported in Table~~\ref{tab:2D_runswide} and also propagated in Figures~\ref{f:iterations} 
and \ref{f:clock_times}. The solvers are all matrix-based, in the sense that the sparse 
matrices implementing the operators $\op{K}_h^s$ are formed in block-diagonal form and prefactored. Only the coarsest Hessian
is formed at resolution $32 \times 32$, which is used as the base case for all cases considered. 
The effect of decreasing the value of the regularizer $\beta$ and/or that of the parameter $s$  is
an increase in the number of CG iterations. {In order} to maintain the number of unpreconditioned CG iterations
between $20$ and $50$ (for {illustration} purposes) we have chosen slightly larger values of $\beta$ as we decreased $s$ in the experiments 
described below. 
The number of CG iterations also indicates the difficulty of the
problem at hand, as it corresponds to the number of relevant eigenmodes that can be recovered for the control for a given
problem setting. All computations were performed using \textsc{Matlab}  on a system with two eight-core 2.9 GHz Intel 
Xeon E5-2690 CPUs and 256 GB memory.

The cases include $s=0.25, 0.3, 0.4, 0.5, 0.6, 0.7$. The results show a dramatic reduction in the 
number of MGCG iterations compared to unpreconditioned CG, as well as a reduction in computing time. 
It is notable that for each case, the number of MGCG iterations is ultimately decreasing with increasing resolution. E.g., for
$s=0.4$ the number of multigrid CG iterations, decreases from 7 on a $64\times 64$ grid to 3 on a $512\times 512$ grid, while
the number of unpreconditioned CG iterations is virtually constant. However, for $s=0.25$ the decrease is less dramatic.
It is expected 
that for a regular, iterative or parallel implementation of the matrix-vector product of the Hessian,
the dramatic decrease in number of iterations {will} be reflected in the decrease of computing time, since
the most expensive iteration remains at the finest-level fractional Poisson solve.

\begin{table}[tbhp]
  {\footnotesize
    \caption{Iteration counts for unpreconditioned CG vs. MGCG with base case $32\times 32$; wall-clock times 
    are shown in seconds in parenthesis. The time marked with ${}^*$ is not relevant, because the computation was forced into much 
    slower swap space due to memory limitations.}
    \label{tab:2D_runswide}
    \begin{center}
      \begin{tabular}{|l|l|l|l|l|l|l|l|} \hline
        \multicolumn{2}{|c|}{$64\times 64$} & \multicolumn{2}{c|}{$128\times  128$} & 
        \multicolumn{2}{c|}{$256\times 256$}& \multicolumn{2}{c|}{$512\times 512$} \\\hline
        CG & MGCG & CG & MGCG & CG & MGCG & CG & MGCG\\\hline
        \multicolumn{8}{|c|}{$s=0.25,\ \ \beta=10^{-2}$}\\\hline
         24 (25) &  9 (10)   & 25  (198)  &  19 (174)   &  25 (1482)  &  12 (880)   & 25 (11326)  & 8 (23876*)   \\\hline
        \multicolumn{8}{|c|}{$s=0.3,\ \ \beta=10^{-2}$}\\\hline
        21 (19) &  6 (6)   &  21 (148)  &  6 (54)   & 21 (1110)  &  4 (299)   & 21 (8805)  & 3 (2045)   \\\hline
        \multicolumn{8}{|c|}{$s=0.4,\ \ \beta=10^{-3}$}\\\hline
        35 (28)  &  7 (6) & 35 (206)  &  7 (52)  & 36 (1635)  &  4 (261)   & 35 (11753) & 3 (1585)     \\\hline
        \multicolumn{8}{|c|}{$s=0.5,\ \ \beta=10^{-3}$}\\\hline
        23 (18)  & 5 (4.5)  & 23 (136) & 4 (32)    & 23 (1015)  & 3 (200)    & 23 (7743) & 2  (1124)    \\\hline
        \multicolumn{8}{|c|}{$s=0.5,\ \ \beta=10^{-4}$}\\\hline
        57 (43)  & 8  (7)  & 56  (316) &  11 (76)    & 55 (2369)   &  7 (402)   & 55 (18090)  &  4 (1900)   \\\hline
        \multicolumn{8}{|c|}{$s=0.6,\ \ \beta=10^{-4}$}\\\hline
        38 (30)  &  6  (5)  & 38 (226) & 4 (33)  &  37 (1687)  &  3 (210)   & 37  (12591)  &  3 (1635)   \\\hline
        \multicolumn{8}{|c|}{$s=0.7,\ \ \beta=10^{-4}$}\\\hline
         27 (25)  &  5  (5)  & 26 (183) & 4 (38)  &  25 (1315)  & 3  (240)   & 25  (9991)  & 3  (2103)   \\\hline
      \end{tabular}
    \end{center}
}
\end{table}

\begin{figure}[htb]
	\includegraphics[width=\textwidth]{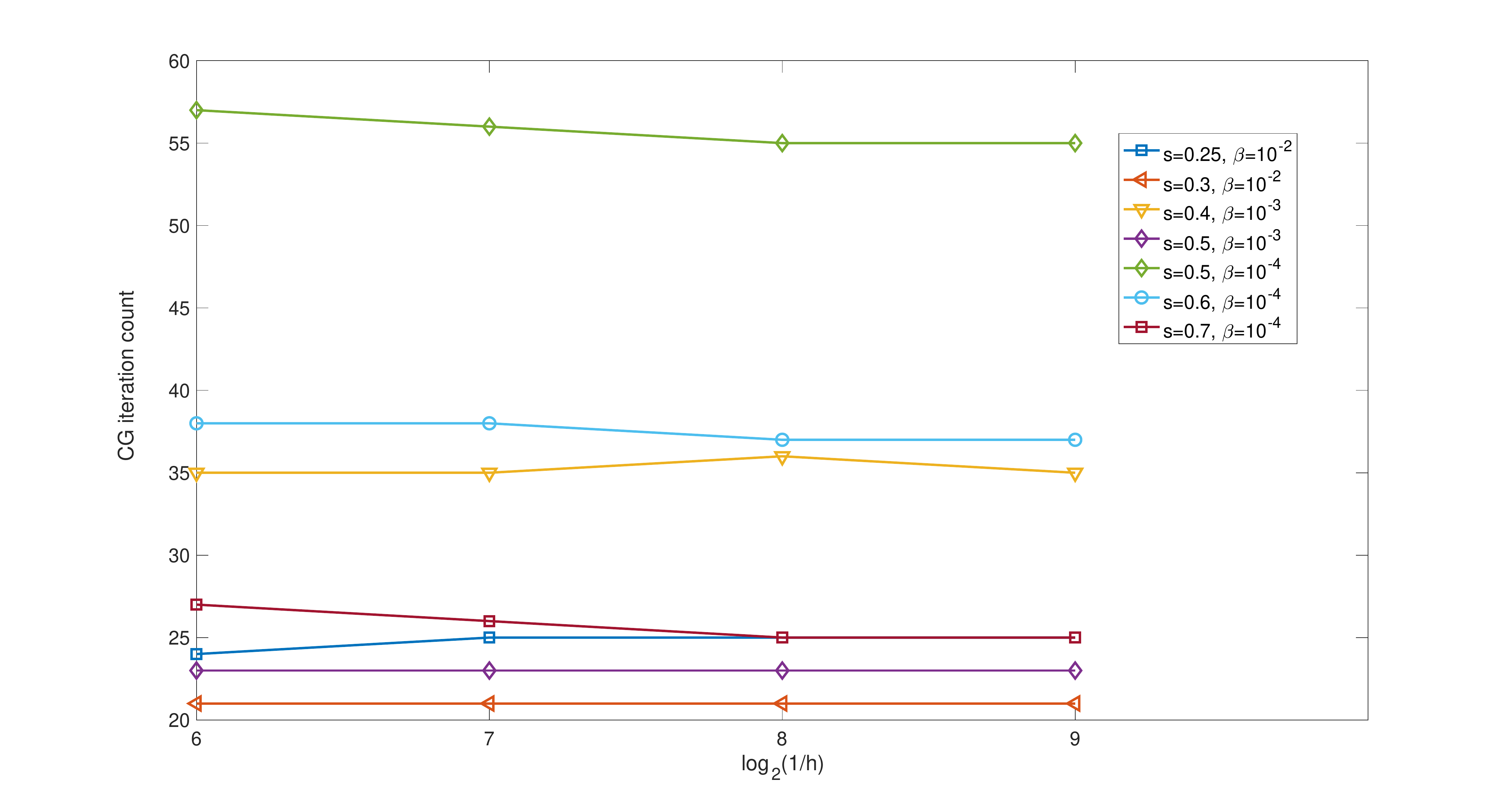}
	\includegraphics[width=\textwidth]{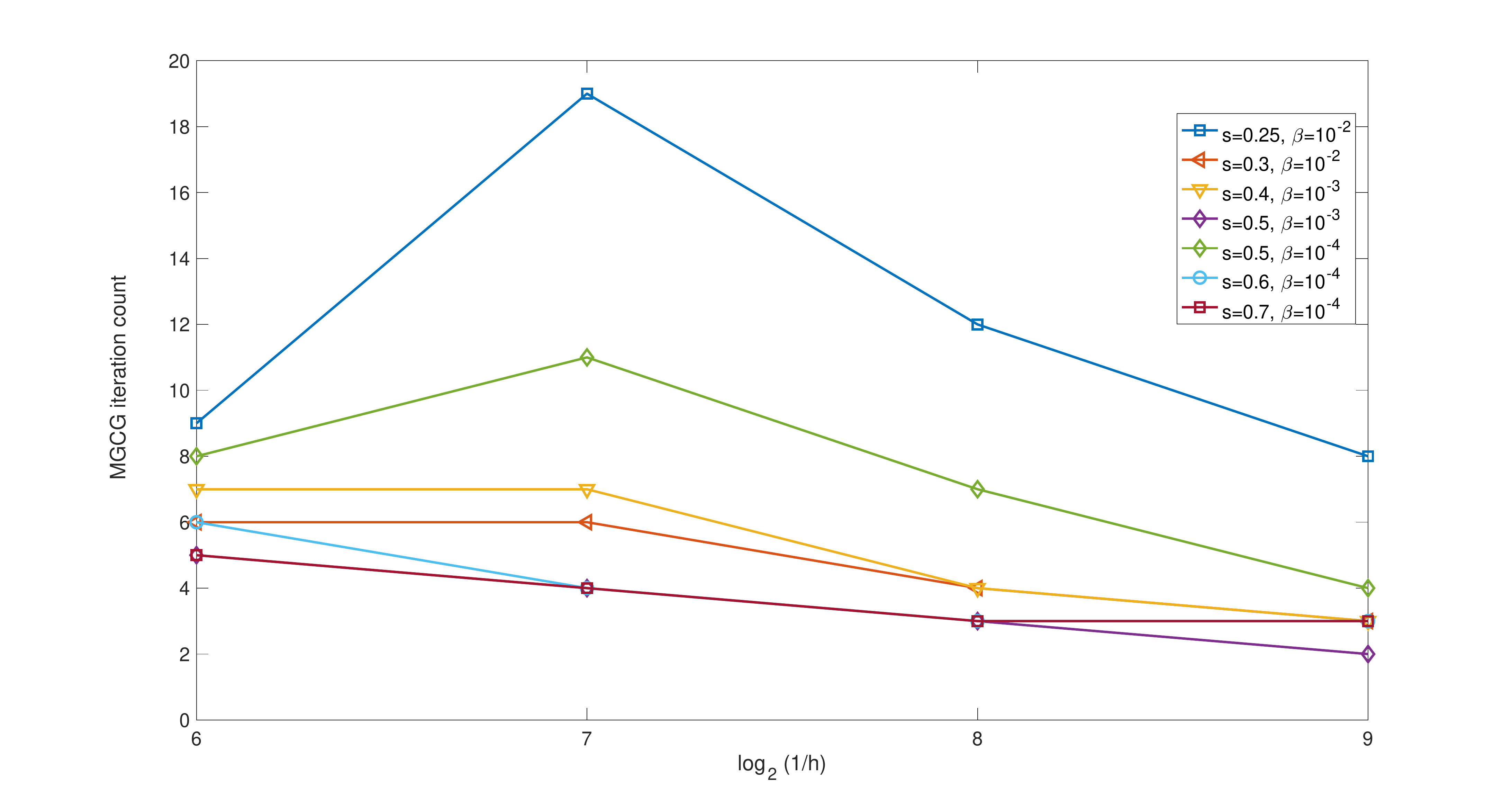}	
	\caption{The top  and bottom panels respectively show the iteration counts
	for CG and MGCG with respect to the mesh-size. As expected, we observe the 
	iteration count to be independent of the mesh-size for CG and this iteration count
	decreases in case of MGCG as the mesh-size decreases, 
        except for, perhaps, the transition from two-to three grids; since the base case is $h=2^{-5}$, we 
        see in some cases a slight increase in MGCG iteration MGCG from $h=2^{-6}$ to $h=2^{-7}$.
        }
	\label{f:iterations}
\end{figure}

\begin{figure}[htb]
	\includegraphics[width=\textwidth]{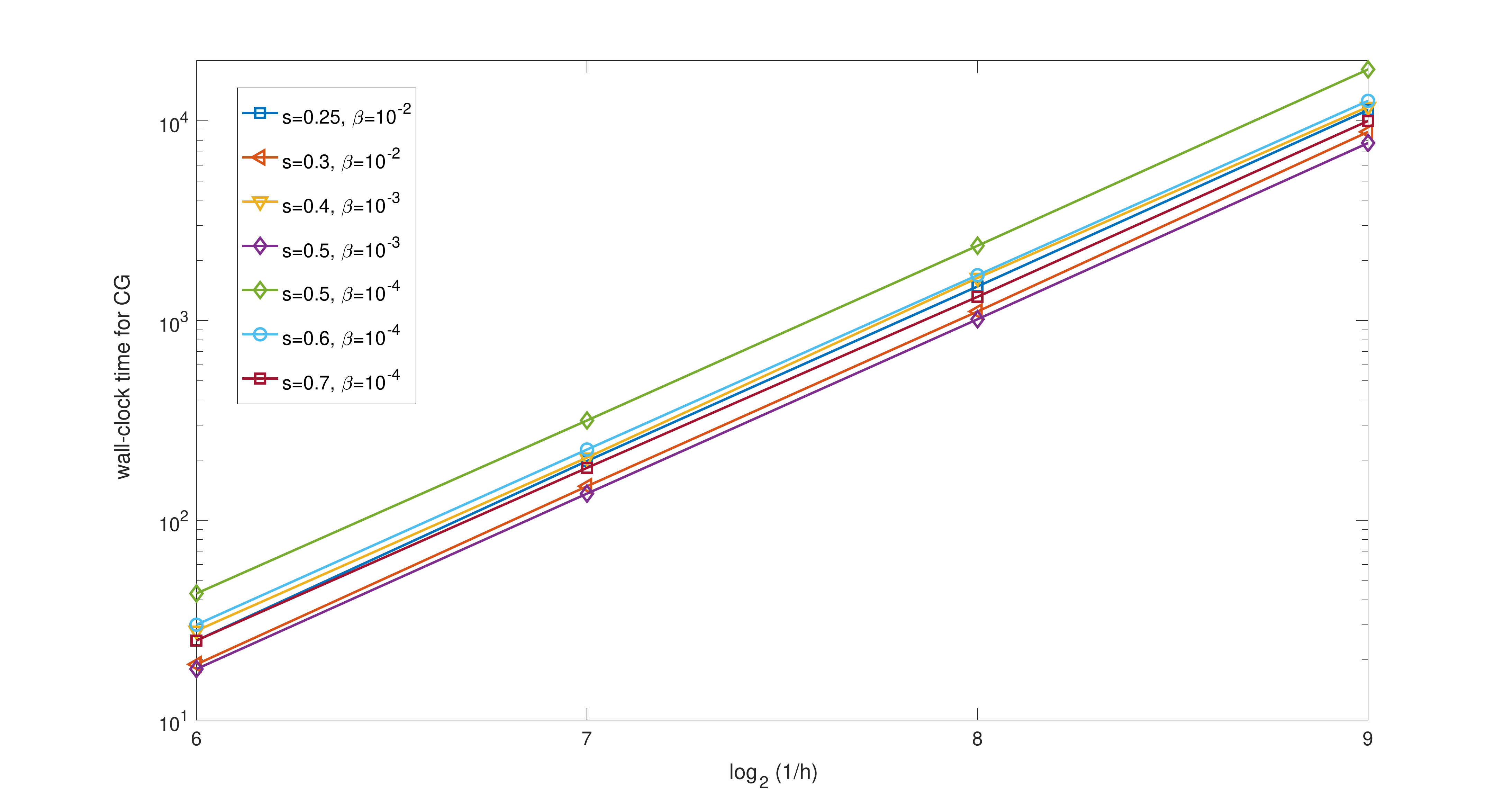}
	\includegraphics[width=\textwidth]{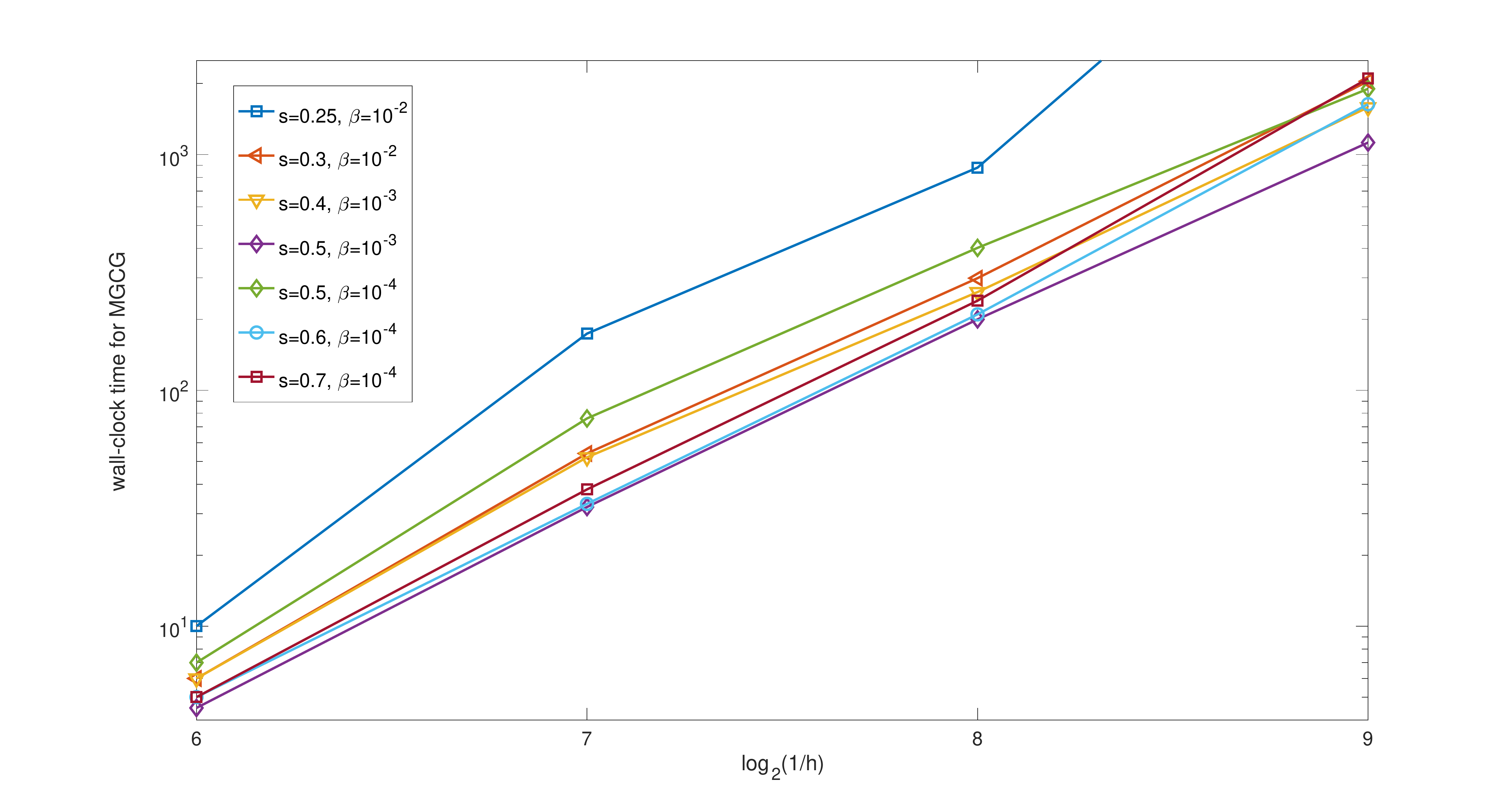}		
	\caption{The top  and bottom panels respectively show the wall clock time 
	for CG and MGCG with respect to the mesh-size. As expected, on the log-log plot 
	we observe a perfectly linear behavior for CG and a sublinear behavior for MGCG
        (we excluded from range the value marked with * in Table~\ref{tab:2D_runswide}, since it only reflects that 
        the machine was forced into much slower swap space).}
	\label{f:clock_times}
\end{figure}

\bibliographystyle{elsarticle-num} 
\bibliography{references}

\def\ocirc#1{\ifmmode\setbox0=\hbox{$#1$}\dimen0=\ht0 \advance\dimen0
  by1pt\rlap{\hbox to\wd0{\hss\raise\dimen0
  \hbox{\hskip.2em$\scriptscriptstyle\circ$}\hss}}#1\else {\accent"17 #1}\fi}
  \def\cprime{$'$} \def\cprime{$'$}
\begin{thebibliography}{10}
\expandafter\ifx\csname url\endcsname\relax
  \def\url#1{\texttt{#1}}\fi
\expandafter\ifx\csname urlprefix\endcsname\relax\def\urlprefix{URL }\fi
\expandafter\ifx\csname href\endcsname\relax
  \def\href#1#2{#2} \def\path#1{#1}\fi

\bibitem{HAntil_JPfefferer_SRogovs_2018a}
H.~Antil, J.~Pfefferer, S.~Rogovs,
  \href{https://doi.org/10.4310/CMS.2018.v16.n5.a11}{Fractional operators with
  inhomogeneous boundary conditions: analysis, control, and discretization},
  Commun. Math. Sci. 16~(5) (2018) 1395--1426.
\newblock \href {https://doi.org/10.4310/CMS.2018.v16.n5.a11}
  {\path{doi:10.4310/CMS.2018.v16.n5.a11}}.
\newline\urlprefix\url{https://doi.org/10.4310/CMS.2018.v16.n5.a11}

\bibitem{CJWeiss_BGVBWaanders_HAntil_2020a}
C.~J. Weiss, B.~G. van Bloemen~Waanders, H.~Antil, Fractional operators applied
  to geophysical electromagnetics, Geophys. J. Int. 220~(2) (2020) 1242--1259.

\bibitem{HAntil_MDelia_CGlusa_CJWeiss_BGVBWaanders_2019a}
H.~Antil, M.~D'Elia, C.~A. Glusa, C.~J. Weiss, B.~G. van Bloemen~Waanders, A
  fast solver for the fractional helmholtz equation., Tech. rep., Sandia
  National Lab.(SNL-NM), Albuquerque, NM (United States) (2019).

\bibitem{larkin2014developments}
P.~Larkin, Developments in direct-field acoustic testing, Sound and Vibration
  Magazine (2014).

\bibitem{auger2006real}
E.~Auger, L.~D'Auria, M.~Martini, B.~Chouet, P.~Dawson, Real-time monitoring
  and massive inversion of source parameters of very long period seismic
  signals: An application to stromboli volcano, italy, Geophysical Research
  Letters 33~(4) (2006).

\bibitem{HAntil_SBartels_2017a}
H.~Antil, S.~Bartels, Spectral {A}pproximation of {F}ractional {PDE}s in
  {I}mage {P}rocessing and {P}hase {F}ield {M}odeling, Comput. Methods Appl.
  Math. 17~(4) (2017) 661--678.

\bibitem{HAntil_ZDi_RKhatri_2020a}
H.~Antil, Z.~W. Di, R.~Khatri, Bilevel optimization, deep learning and
  fractional laplacian regularizatin with applications in tomography, Inverse
  Problems 36~(6) (2020) 064001.
\newblock \href {https://doi.org/10.1088/1361-6420/ab80d7}
  {\path{doi:10.1088/1361-6420/ab80d7}}.

\bibitem{HAntil_EOtarola_2014a}
H.~Antil, E.~Ot{\'a}rola, A {FEM} for an {O}ptimal {C}ontrol {P}roblem of
  {F}ractional {P}owers of {E}lliptic {O}perators, SIAM J. Control Optim.
  53~(6) (2015) 3432--3456.
\newblock \href {https://doi.org/10.1137/140975061}
  {\path{doi:10.1137/140975061}}.

\bibitem{PRStinga_JLTorrea_2010a}
P.~Stinga, J.~Torrea, Extension problem and {H}arnack's inequality for some
  fractional operators, Comm. Part. Diff. Eqs. 35~(11) (2010) 2092--2122.
\newblock \href {https://doi.org/10.1080/03605301003735680}
  {\path{doi:10.1080/03605301003735680}}.

\bibitem{LCaffarelli_LSilvestre_2007a}
L.~Caffarelli, L.~Silvestre, An extension problem related to the fractional
  {L}aplacian, Comm. Part. Diff. Eqs. 32~(7-9) (2007) 1245--1260.
\newblock \href {https://doi.org/10.1080/03605300600987306}
  {\path{doi:10.1080/03605300600987306}}.

\bibitem{SDohr_CKahle_SRogovs_PSwierczynski_2019a}
S.~Dohr, C.~Kahle, S.~Rogovs, P.~Swierczynski, A {FEM} for an optimal control
  problem subject to the fractional laplace equation, Calcolo 56~(4) (2019) 37.

\bibitem{TKato_1960a}
T.~Kato, Note on fractional powers of linear operators, Proceedings of the
  Japan Academy 36~(3) (1960) 94--96.

\bibitem{ABonito_JEPasciak_2015a}
A.~Bonito, J.~Pasciak, Numerical approximation of fractional powers of elliptic
  operators, Math. Comp. 84~(295) (2015) 2083--2110.
\newblock \href {https://doi.org/10.1090/S0025-5718-2015-02937-8}
  {\path{doi:10.1090/S0025-5718-2015-02937-8}}.

\bibitem{GHeidel_VKhoromskaia_BN_Khoromskij_VSchulz_2018a}
G.~Heidel, V.~Khoromskaia, B.~N. Khoromskij, V.~Schulz, Tensor approach to
  optimal control problems with fractional d-dimensional elliptic operator in
  constraints, arXiv preprint arXiv:1809.01971 (2018).

\bibitem{HAntil_MWarma_2020a}
H.~Antil, M.~Warma, \href{https://doi.org/10.1051/cocv/2019003}{Optimal control
  of fractional semilinear {PDE}s}, ESAIM Control Optim. Calc. Var. 26 (2020)
  Paper No. 5, 30.
\newblock \href {https://doi.org/10.1051/cocv/2019003}
  {\path{doi:10.1051/cocv/2019003}}.
\newline\urlprefix\url{https://doi.org/10.1051/cocv/2019003}

\bibitem{MDElia_CGlusa_EOtarola_2019a}
M.~D'Elia, C.~Glusa, E.~Ot\'{a}rola,
  \href{https://doi.org/10.1137/18M1219989}{A priori error estimates for the
  optimal control of the integral fractional {L}aplacian}, SIAM J. Control
  Optim. 57~(4) (2019) 2775--2798.
\newblock \href {https://doi.org/10.1137/18M1219989}
  {\path{doi:10.1137/18M1219989}}.
\newline\urlprefix\url{https://doi.org/10.1137/18M1219989}

\bibitem{HAntil_MWarma_2019a}
H.~Antil, M.~Warma, \href{https://doi.org/10.3934/mcrf.2019001}{Optimal control
  of the coefficient for the regional fractional {$p$}-{L}aplace equation:
  approximation and convergence}, Math. Control Relat. Fields 9~(1) (2019)
  1--38.
\newblock \href {https://doi.org/10.3934/mcrf.2019001}
  {\path{doi:10.3934/mcrf.2019001}}.
\newline\urlprefix\url{https://doi.org/10.3934/mcrf.2019001}

\bibitem{HAntil_MWarma_2018b}
H.~Antil, M.~Warma, Optimal control of the coefficient for fractional $\{$$ p
  $$\}$-$\{$L$\}$ aplace equation: Approximation and convergence, RIMS
  K\^{o}ky\^{u}roku 2090 (2018) 102--116.

\bibitem{HAntil_RKhatri_MWarma_2019a}
H.~Antil, R.~Khatri, M.~Warma,
  \href{https://doi.org/10.1088/1361-6420/ab1299}{External optimal control of
  nonlocal {PDE}s}, Inverse Problems 35~(8) (2019) 084003, 35.
\newblock \href {https://doi.org/10.1088/1361-6420/ab1299}
  {\path{doi:10.1088/1361-6420/ab1299}}.
\newline\urlprefix\url{https://doi.org/10.1088/1361-6420/ab1299}

\bibitem{HAntil_DVerma_MWarma_2020a}
H.~Antil, D.~Verma, M.~Warma,
  \href{https://doi.org/10.1051/cocv/2020005}{External optimal control of
  fractional parabolic {PDE}s}, ESAIM Control Optim. Calc. Var. 26 (2020).
\newblock \href {https://doi.org/10.1051/cocv/2020005}
  {\path{doi:10.1051/cocv/2020005}}.
\newline\urlprefix\url{https://doi.org/10.1051/cocv/2020005}

\bibitem{MAinsworth_CGlusa_2017a}
M.~Ainsworth, C.~Glusa,
  \href{https://doi.org/10.1016/j.cma.2017.08.019}{Aspects of an adaptive
  finite element method for the fractional {L}aplacian: a priori and a
  posteriori error estimates, efficient implementation and multigrid solver},
  Comput. Methods Appl. Mech. Engrg. 327 (2017) 4--35.
\newblock \href {https://doi.org/10.1016/j.cma.2017.08.019}
  {\path{doi:10.1016/j.cma.2017.08.019}}.
\newline\urlprefix\url{https://doi.org/10.1016/j.cma.2017.08.019}

\bibitem{ABorzi_VSchultz_2012a}
A.~Borz\`\i, V.~Schulz, Computational optimization of systems governed by
  partial differential equations, Vol.~8 of Computational Science \&
  Engineering, Society for Industrial and Applied Mathematics (SIAM),
  Philadelphia, PA, 2012.

\bibitem{ADraganescu_TDupont_2008a}
A.~Dr{\u{a}}g{\u{a}}nescu, T.~Dupont, Optimal order multilevel preconditioners
  for regularized ill-posed problems, Mathematics of computation 77~(264)
  (2008) 2001--2038.

\bibitem{LTartar_2007}
L.~Tartar, An introduction to {S}obolev spaces and interpolation spaces, Vol.~3
  of Lecture Notes of the Unione Matematica Italiana, Springer, Berlin, 2007.

\bibitem{HAntil_JPfefferer_2017a}
H.~Antil, J.~Pfefferer, A short matlab implementation of fractional poisson
  equation with nonzero boundary conditions, Tech. rep., Technical report
  (2017).

\bibitem{Barker_Draganescu_2020}
A.~T. Barker, A.~Dr{\u a}g{\u a}nescu, Algebraic multigrid preconditioning of
  the hessian in optimization constrained by a partial differential equation,
  Numerical Linear Algebra with Applications (2020).
\newblock \href {https://doi.org/10.1002/nla.2333}
  {\path{doi:10.1002/nla.2333}}.

\bibitem{SCBrenner_RLScott_2008a}
S.~C. Brenner, L.~Scott, \href{http://dx.doi.org/10.1007/978-0-387-75934-0}{The
  Mathematical Theory of Finite Element Methods}, 3rd Edition, Vol.~15 of Texts
  in Applied Mathematics, Springer, New York, 2008.
\newblock \href {https://doi.org/10.1007/978-0-387-75934-0}
  {\path{doi:10.1007/978-0-387-75934-0}}.
\newline\urlprefix\url{http://dx.doi.org/10.1007/978-0-387-75934-0}

\end{thebibliography}

\end{document}